\font\smallit=cmti10 \font\smalltt=cmtt10 
\renewcommand\section{\@startsection {section}{1}{\z@}
	{-30pt \@plus -1ex \@minus -.2ex} {2.3ex \@plus.2ex}
	{\normalfont\normalsize\bfseries}}
\renewcommand\subsection{\@startsection{subsection}{2}{\z@}
	{-3.25ex\@plus -1ex \@minus -.2ex} {1.5ex \@plus .2ex}
	{\normalfont\normalsize\bfseries}}
\renewcommand{\@seccntformat}[1]{\csname the#1\endcsname. }
\newtheorem{theorem}{Theorem}[section]
\newtheorem{lemma}[theorem]{Lemma}
\newtheorem{corollary}[theorem]{Corollary}
\newtheorem{definition}{Definition}
\newenvironment{proof}
{\vskip 0.15in \par\noindent{\emph{Proof}.}\hskip 0.5em\ignorespaces}
{\hfill $\Box$\par\medskip}
\begin{document}

\begin{center}
	{\bf A NOTE ON THE NUMBER OF REPRESENTATIONS OF $n$ AS A SUM OF GENERALIZED POLYGONAL NUMBERS} \vskip 20pt
	{\bf Subhajit Bandyopadhyay}\\
	{\smallit Department of Mathematical Sciences, Tezpur University, Napaam-784028, Sonitpur, Assam, India}\\
	{\tt subha@tezu.ernet.in}\\  \vskip 10pt
	{\bf Nayandeep Deka Baruah}\\
	{\smallit Department of Mathematical Sciences, Tezpur University, Napaam-784028, Sonitpur, Assam, India}\\
	{\tt nayan@tezu.ernet.in}\\
\end{center}
\vskip 30pt

\centerline{\textbf{Abstract}}
Recently, Jha \cite{Jha1, Jha2} has found identities that connect certain sums over the divisors of $n$ to the number of representations of $n$ as a sum of squares and triangular numbers. In this note, we state a generalized result that gives such relations for $s$-gonal numbers for any integer $s\geq3$.

\pagestyle{myheadings}

\markright{\smalltt INTEGERS: (2022)\hfill}

\thispagestyle{empty}

\baselineskip=12.875pt

\vskip 30pt

\vspace*{-\baselineskip}

\small

\section{\textbf{Introduction}}
Jha \cite{Jha1, Jha2} has obtained two identities that connect certain sums over the divisors of $n$ to the number of representations of $n$ as sums of squares and sums of triangular numbers, respectively. Our objective is to show that these results can be generalized to the number of representations of $n$ as a sum of any specific generalized polygonal number. We also obtain some corollaries, including Jha's results \eqref{corEqMain} and \eqref{triJha}. 
In this section, we introduce two definitions and related tools to use in the later sections.
\begin{definition}
For an integer $s\ge3$, the generalized $n^{\text{th}}$ $s$-gonal number is defined by
$$F_s(n) := \frac{(s-2)n^2 - (s-4)n}{2}, \quad n\in \mathbb{Z}. $$
Henceforth, we call these numbers as $s$-gonal numbers.
\end{definition}

\noindent
The generating function $G_s(q)$ of $F_s(n)$ is given by
$$G_s(q) := \sum_{n=-\infty}^{\infty} q^{F_s(n)} = f(q, q^{s-3}),$$
where $f(a,b)$ is the Ramanujan's theta function defined by \cite[p. 34]{Berndt}:
\begin{align*}
f(a,b)=\sum_{n=-\infty}^\infty a^{n(n+1)/2}b^{n(n-1)/2},\quad |ab|<1.
\end{align*}
Also note the exceptional case that $G_3(q)$ generates each triangular number twice while $G_6(q)$ generates only once.

\begin{definition} \textup{(Comtet, \cite[p. 133]{Comtet})}
The partial Bell polynomials are the polynomials $B_{n,k} \equiv B_{n,k}(x_{1},x_{2},\cdots,x_{n-k+1})$ in an infinite number of variables defined by the formal double series expansion:
\begin{align*}
\sum_{n,k \geq 0} B_{n,k} \frac{t^n}{n!} u^k =  \textup{exp}\left(u \sum_{m\geq1} x_m \frac{t^m}{m!} \right).   
\end{align*}
\end{definition}
For more equivalent definitions, exact expressions and further results involving the Bell polynomials, we refer to \cite[Chap. 3.3]{Comtet}.

The next section contains some lemmas and the main theorem. In the final section, we present  some corollaries, including Jha's results \eqref{corEqMain} and \eqref{triJha}.

\section{\textbf{Lemmas and Main Theorem \ref{theorem1}}}
\noindent
In the following, we state and prove two lemmas that lead us to the main theorem, i.e., Theorem \ref{theorem1}.
\noindent
\begin{lemma} \label{lem1}
Let $n$ be a positive integer. Then we have the following result.
\begin{equation*}\begin{aligned}
\sum_{d|n} &\dfrac{1}{d}\left((-1)^d \delta_1\left(\frac{n}{d},s-2\right) + \delta_2\left(\frac{n}{d},s-2\right)\right) \notag\\
&= \frac{1}{n!} \sum_{k=1}^{n} (-1)^k (k-1)! B_{n,k}(G'_{s}(0),G''_{s}(0),\ldots,G^{n-k+1}_{s}(0)),
\end{aligned}
\end{equation*}
where, we define $\delta_1(m,v)$ and $\delta_2(m,v)$ for $v \geq 2$ as follows: 
\begin{equation*}
\delta_1(m,v) = 
\begin{cases}
2, &\quad\text{if } m \equiv 1 (\textup{mod}~2),~ v=2, \\
1, &\quad\text{if } m \equiv 1 \text{ or } (v-1) (\textup{mod}~v),~ v \geq 3, \\
0, &\quad\text{otherwise},
\end{cases}    
\end{equation*}
and
\begin{equation*}
\delta_2(m,v) = 
\begin{cases}
1, &\quad\text{if } m \equiv 0 (\textup{mod}~v),~ v \geq 2, \\
0, &\quad\text{otherwise.}
\end{cases}    
\end{equation*}
\end{lemma}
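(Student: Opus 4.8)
The plan is to route both sides through the formal power series $\log G_s(q)$ and compare the coefficients of $q^n$. The right-hand side is, up to an overall sign, a Taylor coefficient of $\log G_s(q)$, while the left-hand side is the coefficient obtained by expanding the Jacobi triple product form of $G_s(q)$; matching the two gives the identity. Throughout I work in the range where $\delta_1,\delta_2$ are defined, i.e. $s-2\ge2$, equivalently $s\ge4$.

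First I would rewrite the right-hand side. Starting from the defining series of the partial Bell polynomials with $x_m=G^{(m)}_s(0)$ and setting $w(t)=\sum_{m\ge1}G^{(m)}_s(0)\,t^m/m!$, expansion of the exponential gives $w^k/k!=\sum_{n}B_{n,k}\,t^n/n!$. Combining this with $\log(1+w)=\sum_{k\ge1}(-1)^{k-1}w^k/k$ and using $G_s(0)=1$ — which holds for $s\ge4$, since $n=0$ is then the only integer with $F_s(n)=0$, so that $1+w(q)=G_s(q)$ — yields
\begin{equation*}
\log G_s(q)=\sum_{n\ge1}\Bigg(\sum_{k=1}^n(-1)^{k-1}(k-1)!\,B_{n,k}\big(G'_s(0),\dots,G^{(n-k+1)}_s(0)\big)\Bigg)\frac{q^n}{n!}.
\end{equation*}
Since $(-1)^k=-(-1)^{k-1}$, the right-hand side of the lemma is exactly $-[q^n]\log G_s(q)$, so it remains to show that the divisor sum on the left equals $-[q^n]\log G_s(q)$.

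Next I would compute $\log G_s(q)$ from a product expansion. By the Jacobi triple product identity $f(a,b)=(-a;ab)_\infty(-b;ab)_\infty(ab;ab)_\infty$ \cite{Berndt}, taking $a=q$ and $b=q^{s-3}$ (so that $ab=q^{s-2}$) gives
\begin{equation*}
G_s(q)=f(q,q^{s-3})=(-q;q^{s-2})_\infty\,(-q^{s-3};q^{s-2})_\infty\,(q^{s-2};q^{s-2})_\infty.
\end{equation*}
Taking $-\log$ and expanding each factor with $-\log(1+x)=\sum_{\ell\ge1}(-1)^\ell x^\ell/\ell$ and $-\log(1-x)=\sum_{\ell\ge1}x^\ell/\ell$, the exponents $m$ that occur are precisely those with $m\equiv1\pmod{s-2}$ (first factor), $m\equiv s-3\equiv-1\pmod{s-2}$ (second factor), and $m\equiv0\pmod{s-2}$ (third factor). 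Collecting the coefficient of $q^n$ by writing $n=md$ with $d=\ell$ and $m=n/d$, the two $\pm1$ classes each contribute $(-1)^d/d$ and the $0$ class contributes $1/d$; these residue conditions are exactly the ones recorded by $\delta_1(n/d,s-2)$ and $\delta_2(n/d,s-2)$, so $[q^n]\bigl(-\log G_s(q)\bigr)$ becomes the asserted divisor sum.

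The step I expect to require the most care is the bookkeeping of residue classes, and in particular the degenerate case $s=4$ (that is, $s-2=2$): there $s-3=1$, so the first two factors both equal $(-q;q^2)_\infty$ and the odd class $m\equiv1\pmod2$ is produced twice. This doubling is exactly why the definition sets $\delta_1(m,2)=2$ for odd $m$, and I would either treat $s=4$ separately or simply observe that $1\equiv s-3\pmod 2$ merges the two classes $1$ and $s-3$. Once this is handled, verifying $G_s(0)=1$ and the validity of the formal-series manipulations is routine, and equating the two expressions for $[q^n]\bigl(-\log G_s(q)\bigr)$ completes the proof.
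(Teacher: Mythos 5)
Your proposal is correct and follows essentially the same route as the paper: expand $\log G_s(q)$ via the Jacobi triple product on one side and via the logarithmic (Bell) polynomial formula on the other, then compare coefficients of $q^n$ (the paper simply cites Comtet for the latter expansion rather than rederiving it from the generating function). Your explicit treatment of the degenerate case $s-2=2$, where the residue classes $1$ and $s-3$ coincide and produce the factor $2$ in $\delta_1(m,2)$, is a point the paper glosses over with ``follow naturally,'' so that remark is a welcome addition rather than a deviation.
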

\begin{proof}
By Jacobi triple product identity \cite[p. 35, Entry 19]{Berndt}, we have
\begin{align*}
G_s(q) &= f(q,q^{s-3}) \\
&= (-q;q^{s-2})_{\infty}(-q^{s-3};q^{s-2})_{\infty}(q^{s-2};q^{s-2})_{\infty} \\
&= \prod_{j=0}^{\infty} \left((1+q^{(s-2)j+1})(1+q^{(s-2)j+s-3})(1-q^{(s-2)(j+1)})\right).
\end{align*}
Therefore,
\begin{equation}\label{lhslem1}
\begin{aligned}\log G_s(q)&= \sum_{j=0}^{\infty}\left(\log(1+q^{(s-2)j+1}) + \log(1+q^{(s-2)j+s-3}) + \log(1-q^{(s-2)(j+1)}) \right) \\
&= - \sum_{j=0}^{\infty} \sum_{\ell=1}^{\infty} \left(\frac{(-1)^\ell}{\ell} q^{((s-2)j+1)\ell} + \frac{(-1)^\ell}{\ell} q^{((s-2)j+s-3)\ell} + \frac{1}{\ell} q^{(s-2)(j+1)\ell}  \right) \\
&= - \sum_{\substack{j\geq1 \\ j\equiv1~(\textup{mod}~{s-2})}} \sum_{\ell\geq1} \frac{(-1)^\ell}{\ell} q^{j\ell} - \sum_{\substack{j\geq1 \\ j\equiv-1~(\textup{mod}~{s-2})}} \sum_{l\geq1} \frac{(-1)^\ell}{\ell} q^{j\ell} \\
&\quad- \sum_{\substack{j\geq1 \\ j\equiv0~(\textup{mod}~{s-2})}} \sum_{\ell\geq1} \frac{1}{\ell} q^{j\ell} \\
&= - \sum_{n\geq1} q^{n} \left( \sum_{d|n} \dfrac{1}{d}\left((-1)^d \delta_1\left(\frac{n}{d},s-2\right) + \delta_2\left(\frac{n}{d},s-2\right)\right) \right), 
\end{aligned}
\end{equation}
where the given definitions of $\delta_1(m,v)$ and $\delta_2(m,v)$ follow naturally.

\noindent
Now, let the Taylor series expansion of $G_s(q)$ be 
$$G_s(q)= \sum_{n\geq0}g_n \frac{q^n}{n!}.$$Then, from \cite[p. 140, (5a) and (5b)]{Comtet}, we have the following result:
\begin{equation} \label{rhslem1}
\log G_s(q) = \sum_{n\geq1} L_n \frac{q^n}{n!},
\end{equation}
where
$$L_n = L_n(g_1, g_2,\ldots, g_n) = \sum_{k=1}^{n}(-1)^k (k-1)! B_{n,k}(g_1, g_2,\ldots, g_n).$$
Comparing \eqref{lhslem1} and \eqref{rhslem1}, we arrive at the desired result.
\end{proof}

\noindent
\begin{lemma} \label{lem2}
Let $t_{s,j}(n)$ denote the number of representations of $n$ as a sum of $j$ $s$-gonal numbers. Then, we have
$$B_{n,k}(G'_{s}(0),G''_{s}(0),\cdots,G^{n-k+1}_{s}(0)) = \frac{n!}{k!} \sum_{j=1}^{k} (-1)^{k-j} \binom{k}{j} t_{s,j}(n) .$$
\end{lemma}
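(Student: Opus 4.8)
The plan is to read off both sides of the asserted identity from the exponential generating function that \emph{defines} the partial Bell polynomials, specializing the variables to $x_m = G_s^{(m)}(0)$. With this choice the inner sum in the definition becomes a shift of $G_s$ itself: since the summation index runs over $m\ge1$, the constant term is omitted, and
\[
\sum_{m\ge1} G_s^{(m)}(0)\,\frac{t^m}{m!} = G_s(t) - G_s(0).
\]
Here I would use the product expansion from the proof of Lemma \ref{lem1} (equivalently, the Jacobi triple product form of $G_s$) to record the constant term $G_s(0)=1$, so that the bracket is exactly $G_s(t)-1$.

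First I would extract the coefficient of $u^k$ from the defining double series. Writing $\textup{exp}\big(u(G_s(t)-1)\big)=\sum_{k\ge0}\frac{u^k}{k!}\,(G_s(t)-1)^k$ and matching with $\sum_{n,k}B_{n,k}\,t^n u^k/n!$ gives, for each fixed $k$,
\[
\sum_{n\ge0} B_{n,k}\big(G_s'(0),\dots,G_s^{n-k+1}(0)\big)\,\frac{t^n}{n!} = \frac{1}{k!}\,(G_s(t)-1)^k,
\]
so that $B_{n,k} = \dfrac{n!}{k!}\,[t^n]\,(G_s(t)-1)^k$, where $[t^n]$ denotes the coefficient of $t^n$.

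Next I would expand the $k$-th power by the binomial theorem,
\[
(G_s(t)-1)^k = \sum_{j=0}^{k} (-1)^{k-j}\binom{k}{j}\,G_s(t)^j,
\]
and invoke the generating function of $t_{s,j}(n)$: since an ordered representation of $n$ as a sum of $j$ $s$-gonal numbers corresponds to a monomial in $G_s(t)^j=\sum_{N_1,\dots,N_j} t^{F_s(N_1)+\cdots+F_s(N_j)}$, we have $G_s(t)^j=\sum_{n\ge0}t_{s,j}(n)\,t^n$, hence $[t^n]G_s(t)^j=t_{s,j}(n)$. Reading off the coefficient of $t^n$ and noting that the $j=0$ term contributes only to $n=0$ and so drops out for $n\ge1$, I obtain
\[
B_{n,k} = \frac{n!}{k!}\sum_{j=1}^{k}(-1)^{k-j}\binom{k}{j}\,t_{s,j}(n),
\]
which is the claim.

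The binomial expansion and coefficient comparison are routine; the one place that needs genuine care is bookkeeping the two different normalizations — the exponential generating function (with $t^n/n!$) built into the Bell polynomials versus the ordinary generating function $G_s(t)^j=\sum t_{s,j}(n)\,t^n$ for the representation numbers — which is precisely what produces the factor $n!/k!$. I would also flag the constant-term evaluation $G_s(0)=1$ as the step that fixes the signs as $(-1)^{k-j}$; in the exceptional case $s=3$, where $G_3(0)=2$ because each triangular number is generated twice, this evaluation (and hence the interpretation of $t_{3,j}$) must be adjusted accordingly.
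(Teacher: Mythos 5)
Your argument is correct and is precisely the generating-function computation that the paper omits by citing Jha's Lemma 2: specialize the Bell-polynomial definition at $x_m=G_s^{(m)}(0)$, extract the coefficient of $u^k$ to get $(G_s(t)-1)^k/k!$, and expand binomially using $[t^n]G_s(t)^j=t_{s,j}(n)$. Your closing remarks on the $n!/k!$ normalization, the evaluation $G_s(0)=1$, and the exceptional case $s=3$ (consistent with the paper's restriction to $s\ge4$ in Theorem \ref{theorem1}) are exactly the right points of care.
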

\begin{proof}
The proof is similar to the one given in \cite[Lemma 2]{Jha2}. So we omit.
\end{proof}

\noindent
\begin{theorem} \label{theorem1}
For all positive integers $n,s$ with $s\geq4$, we have  
$$\sum_{d|n} \frac{1}{d}\left((-1)^d \delta_1\left(\frac{n}{d},s-2\right) + \delta_2\left(\frac{n}{d},s-2\right)\right) = \sum_{j=1}^{n} \frac{(-1)^j}{j} \binom{n}{j} t_{s,j}(n).$$
\end{theorem}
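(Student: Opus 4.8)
The plan is to chain together the two lemmas and then resolve the resulting double sum by a combinatorial identity. Starting from the left-hand side, Lemma~\ref{lem1} rewrites it as
$$\frac{1}{n!}\sum_{k=1}^{n}(-1)^k(k-1)!\,B_{n,k}\bigl(G'_s(0),G''_s(0),\ldots,G^{n-k+1}_s(0)\bigr),$$
and then Lemma~\ref{lem2} replaces each Bell polynomial, turning the expression into
$$\sum_{k=1}^{n}\frac{(-1)^k}{k}\sum_{j=1}^{k}(-1)^{k-j}\binom{k}{j}t_{s,j}(n),$$
where the factor $\tfrac{1}{n!}\cdot\tfrac{n!}{k!}\cdot(k-1)!=\tfrac{1}{k}$ collapses neatly. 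So the first two steps are purely a matter of substitution, with all the problem-specific analytic content already absorbed into the lemmas.

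The next step is to interchange the order of summation. Since $j$ runs from $1$ to $k$ while $k$ runs from $1$ to $n$, swapping gives $j$ from $1$ to $n$ and $k$ from $j$ to $n$. Noting that $(-1)^k(-1)^{k-j}=(-1)^{2k-j}=(-1)^j$ is independent of $k$, I can pull the sign and $t_{s,j}(n)$ out of the inner sum to obtain
$$\sum_{j=1}^{n}(-1)^j\,t_{s,j}(n)\sum_{k=j}^{n}\frac{1}{k}\binom{k}{j}.$$
Comparing this with the claimed right-hand side, the entire theorem reduces to verifying the inner combinatorial identity $\sum_{k=j}^{n}\frac{1}{k}\binom{k}{j}=\frac{1}{j}\binom{n}{j}$.

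I expect this last identity to be the one genuine calculation, though it is routine. The key observation is the absorption relation $\frac{1}{k}\binom{k}{j}=\frac{1}{j}\binom{k-1}{j-1}$, which follows immediately from writing out the factorials. After reindexing $m=k-1$, the sum becomes $\frac{1}{j}\sum_{m=j-1}^{n-1}\binom{m}{j-1}$, and the hockey-stick identity $\sum_{m=j-1}^{n-1}\binom{m}{j-1}=\binom{n}{j}$ finishes the job. Substituting back yields exactly $\sum_{j=1}^{n}\frac{(-1)^j}{j}\binom{n}{j}t_{s,j}(n)$, which is the right-hand side. The only mild subtlety to keep an eye on is bookkeeping the signs and the index ranges through the interchange of summation; everything else is formal manipulation. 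Note that the restriction $s\geq4$ here (as opposed to $s\geq3$) presumably reflects the exceptional double-counting in $G_3(q)$ flagged after Definition~1, so the triangular case must be handled separately rather than through this chain.
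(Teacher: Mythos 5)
Your proposal is correct and follows essentially the same route as the paper: chain Lemmas \ref{lem1} and \ref{lem2}, simplify the factorials to $\tfrac{1}{k}$, interchange the order of summation, and reduce to the identity $\sum_{k=j}^{n}\frac{1}{k}\binom{k}{j}=\frac{1}{j}\binom{n}{j}$. The only cosmetic difference is that you verify this last identity via absorption and the hockey-stick identity, whereas the paper telescopes using $\binom{k}{j-1}=\binom{k+1}{j}-\binom{k}{j}$; both are routine and equivalent.
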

\begin{proof}
Our proof of the theorem is essentially similar to the one given in \cite[Theorem 1]{Jha2}. From Lemma \ref{lem1} and Lemma \ref{lem2}, we have
\begin{align*}
&\sum_{d|n} \frac{1}{d}\left((-1)^d \delta_1(\frac{n}{d},s-2) + \delta_2(\frac{n}{d},s-2)\right) \\
&= \frac{1}{n!} \sum_{k=1}^{n} (-1)^k (k-1)! \frac{n!}{k!} \sum_{j=1}^{k} (-1)^{k-j} \binom{k}{j} t_{s,j}(n) \\
&= \sum_{k=1}^{n} \sum_{j=1}^{k} \frac{(-1)^{j}}{k} \binom{k}{j} t_{s,j}(n) \\ 
&= \sum_{j=1}^{n} (-1)^{j} t_{s,j}(n) \sum_{k=j}^{n} \frac{1}{k} \binom{k}{j} \\
&= \sum_{j=1}^{n} (-1)^{j} \frac{1}{j} \binom{n}{j} t_{s,j}(n), 
\end{align*}
where we have used the result 
$$\sum_{k=j}^{n} \frac{1}{k} \binom{k}{j} = \frac{1}{j} \binom{n}{j} $$
that can be derived easily from the identity
$$\binom{k}{j-1} = \binom{k+1}{j} - \binom{k}{j}.$$
\end{proof}

\section{\textbf{Corollaries}}
\noindent
In this section, we present  the results in \cite{Jha1,Jha2} and some other interesting results as  corollaries to Main Theorem \ref{theorem1}.

\begin{corollary}\textup{(Jha \cite[Theorem 1]{Jha2})}
For any positive integer $n$, we have
\begin{align}\label{corEqMain}\sum_{\substack{d|n \\ d~\textup{odd}}} \frac{2(-1)^n}{d} = \sum_{j=1}^{n} \frac{(-1)^j}{j} \binom{n}{j} t_{4,j}(n).\end{align}
\end{corollary}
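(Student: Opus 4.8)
The plan is to specialize Theorem \ref{theorem1} to $s=4$ and then simplify only the divisor sum on the left. Note that setting $s=4$ turns the right-hand side of Theorem \ref{theorem1} into $\sum_{j=1}^{n}\frac{(-1)^j}{j}\binom{n}{j}t_{4,j}(n)$, which is already the right-hand side of \eqref{corEqMain} verbatim (here $F_4(k)=k^2$, so $t_{4,j}(n)$ counts representations of $n$ as a sum of $j$ squares). Hence all of the work lies in rewriting the left-hand side.

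First I would evaluate the delta symbols at $v=s-2=2$. Directly from their definitions, $\delta_1(m,2)=2$ when $m$ is odd and $0$ when $m$ is even, while $\delta_2(m,2)=1$ when $m$ is even and $0$ when $m$ is odd. Consequently the left-hand side of Theorem \ref{theorem1} collapses to
$$\sum_{d\mid n}\frac1d\Big(2(-1)^d\,[\tfrac nd\ \text{odd}]+[\tfrac nd\ \text{even}]\Big),$$
where $[\cdot]$ is the Iverson bracket, and the goal is to show that this equals $2(-1)^n\sum_{d\mid n,\ d\ \text{odd}}\frac1d$.

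To carry this out I would introduce the $2$-adic factorization $n=2^{a}u$ with $u$ odd, and parametrize each divisor as $d=2^{b}v$ with $0\le b\le a$ and $v\mid u$. Then $n/d$ is odd exactly when $b=a$ (so $d=2^{a}v$) and even otherwise. The $b=a$ terms contribute $\sum_{v\mid u}\frac{2(-1)^{2^{a}v}}{2^{a}v}$, while the $b<a$ terms contribute $\sum_{b=0}^{a-1}\sum_{v\mid u}\frac{1}{2^{b}v}$. Since the odd divisors of $n$ are precisely the $v\mid u$, every term factors through the common sum $\sum_{v\mid u}\frac1v$, and summing the inner geometric series in $b$ reduces the whole expression to a single scalar times $\sum_{v\mid u}\frac1v$.

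The main obstacle is the sign and parity bookkeeping needed to reconcile $(-1)^d$ on the left with $(-1)^n$ on the right, and the cleanest route is to split on the parity of $n$. When $n$ is odd ($a=0$) only the $b=a$ terms survive, each carrying $(-1)^{v}=-1=(-1)^n$, giving the scalar $2(-1)^n$. When $n$ is even ($a\ge1$) one has $(-1)^{2^{a}v}=1$, and the geometric sum $\sum_{b=0}^{a-1}2^{-b}=2-2^{1-a}$ exactly cancels the $2^{1-a}$ coming from the $b=a$ contribution, again leaving the scalar $2=2(-1)^n$. In either case the surviving scalar is $2(-1)^n$, and the identity \eqref{corEqMain} follows.
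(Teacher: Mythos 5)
Your proposal is correct and follows essentially the same route as the paper: specialize Theorem \ref{theorem1} at $s=4$ and then verify the divisor-sum identity by sorting divisors according to their power of $2$. The only difference is cosmetic --- the paper splits the even case into $n=2^k$ and $n=2^km$ with $m>1$ odd, whereas you handle both at once via the uniform parametrization $n=2^au$, $d=2^bv$.
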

\begin{proof}
Setting $s=4$ in Theorem \ref{theorem1}, we find that
\begin{align}\label{corEq1}
\sum_{\substack{d|n \\ \frac{n}{d} \text{ odd}}} \frac{2(-1)^d}{d} + \sum_{\substack{d|n \\ \frac{n}{d} \text{ even}}} \frac{1}{d} = \sum_{j=1}^{n} \frac{(-1)^j}{j} \binom{n}{j} t_{4,j}(n).
\end{align}
To prove that \eqref{corEq1} is equivalent to \eqref{corEqMain}, it is enough to show that 
\begin{align}\label{corEq2}\sum_{\substack{d|n \\ d~\textup{odd}}} \frac{2(-1)^n}{d} = \sum_{\substack{d|n \\ \frac{n}{d} \text{ odd}}} \frac{2(-1)^d}{d} + \sum_{\substack{d|n \\ \frac{n}{d} \text{ even}}} \frac{1}{d} .\end{align}
We complete it by considering the following three possible cases of $n$.

\noindent \emph{Case I, $n$ is odd}: In this case, it is easily seen that both sides of \eqref{corEq2} become 
$$-\sum_{\substack{d|n \\ d~\textup{odd}}} \frac{2}{d}.$$

\noindent \emph{Case II, $n=2^k$, where $k\geq1$}: In this case, the left-hand side of \eqref{corEq2} is equal to $2$.

Now, the right-hand side of \eqref{corEq2} becomes
\begin{align*}\sum_{\substack{d|2^k \\ \frac{2^k}{d} \text{ odd}}} \frac{2(-1)^d}{d} + \sum_{\substack{d|2^k \\ \frac{2^k}{d} \text{ even}}} \frac{1}{d}&=\frac{1}{2^{k-1}}+\left(1+\frac{1}{2}+\frac{1}{2^2}+\cdots+\frac{1}{2^{k-1}}\right)\\
&=\frac{1}{2^{k-1}}+\frac{2^k-1}{2^{k-1}}\\
&=2.\end{align*}
Thus, \eqref{corEq2} holds good for this case.

\noindent \emph{Case III, $n=2^km$ where $k\geq1$ and $m$ is odd and greater than 1}: The left-hand side of \eqref{corEq2} becomes
\begin{align}\label{corEq3}\sum_{\substack{d|n \\ d~\textup{odd}}} \frac{2}{d}.
\end{align}

\noindent
Again, the right-hand side of \eqref{corEq2} is
\begin{equation}\label{corEq4}
\begin{aligned}&\sum_{\substack{d|2^km \\ \frac{2^km}{d} \text{ odd}}} \frac{2(-1)^d}{d} + \sum_{\substack{d|2^km \\ \frac{2^km}{d} \text{ even}}} \frac{1}{d}\\
&=\sum_{\substack{d|2^km \\ d~ \textup{odd}}} \frac{2}{2^kd} + \Big(\sum_{\substack{d|2^km \\ d~ \text{odd}}} \frac{1}{d} +\sum_{\substack{d|2^km \\ d~ \text{odd}}} \frac{1}{2d}+\sum_{\substack{d|2^km \\ d~ \text{odd}}} \frac{1}{2^2d}+\cdots+\sum_{\substack{d|2^km \\ d~ \text{odd}}} \frac{1}{2^{k-1}d}\Big)\\
&=\left(\frac{1}{2^{k-1}} + 1+\frac{1}{2}+\frac{1}{2^2}+\cdots+ \frac{1}{2^{k-1}}\right)\sum_{\substack{d|n \\ d~ \text{odd}}} \frac{1}{d}\\
&=\sum_{\substack{d|n \\ d~\textup{odd}}} \frac{2}{d}.\end{aligned}
\end{equation}
From \eqref{corEq3} and \eqref{corEq4}, we conclude that \eqref{corEq2} holds good for this case as well. 
\end{proof}

\noindent
\begin{corollary}\textup{(Jha \cite[Theorem 1]{Jha1})} 
For any positive integer $n$, we have
\begin{align}\label{triJha}\sum_{d|n}\frac{1+2(-1)^d}{d}= \sum_{j=1}^n \frac{(-1)^j}{j} \binom{n}{j} t_{6,j}(n).
\end{align}
\end{corollary}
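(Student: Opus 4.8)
The plan is to specialize Theorem \ref{theorem1} to the case $s=6$ and then reconcile the resulting divisor sum with the one appearing in \eqref{triJha}. Setting $s=6$ gives $v=s-2=4$, so first I would evaluate the two indicator functions at $v=4$: the condition $\delta_1(m,4)$ picks out $m\equiv1$ or $m\equiv3\pmod 4$, i.e. $m$ odd, and contributes $1$ in that case; the condition $\delta_2(m,4)$ picks out $m\equiv0\pmod4$ and contributes $1$. Substituting $m=n/d$, the left-hand side of Theorem \ref{theorem1} becomes
\begin{equation*}
\sum_{\substack{d\mid n\\ n/d\ \textup{odd}}}\frac{(-1)^d}{d}+\sum_{\substack{d\mid n\\ n/d\equiv0\,(\textup{mod}\,4)}}\frac{1}{d},
\end{equation*}
while the right-hand side is already exactly the right-hand side of \eqref{triJha}. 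So the entire task reduces to a divisor-sum identity, namely showing that this expression equals $\sum_{d\mid n}(1+2(-1)^d)/d$.

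The main work, then, is purely arithmetic and parallels the three-case argument used for the previous corollary. I would write the target identity as
\begin{equation}\label{triReduce}
\sum_{d\mid n}\frac{1+2(-1)^d}{d}
=\sum_{\substack{d\mid n\\ n/d\ \textup{odd}}}\frac{(-1)^d}{d}+\sum_{\substack{d\mid n\\ n/d\equiv0\,(\textup{mod}\,4)}}\frac{1}{d},
\end{equation}
and verify it by splitting $n$ according to its $2$-adic valuation: $n$ odd, $n=2^k$ with $k\ge1$, and $n=2^k m$ with $k\ge1$ and $m>1$ odd. In each case I would factor every divisor as $2^a e$ with $e$ odd, track the sign $(-1)^d$ (which is $-1$ iff $a=0$) and the congruence class of $n/d$ modulo $4$, and collect the contributions as geometric-type sums in $1/2^a$ over an odd-divisor sum $\sum_{e\mid m}1/e$. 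The identity \eqref{corEq2} has the same flavour, so the bookkeeping for the factor $1+2(-1)^d$ on the left versus the split on the right should collapse cleanly in each case.

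I expect the only subtle point to be the middle condition $n/d\equiv0\pmod4$, which requires the $2$-part of $d$ to be at most $2^{k-2}$ and therefore contributes nothing when $k=0$ or $k=1$; keeping this boundary behaviour straight across the three cases is where a sign or range error could creep in, so that is the step I would check most carefully. Once \eqref{triReduce} is established in all three cases, the corollary follows immediately by combining it with the $s=6$ specialization of Theorem \ref{theorem1}.

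\begin{proof}
Setting $s=6$ (so that $v=s-2=4$) in Theorem \ref{theorem1}, the definitions of $\delta_1$ and $\delta_2$ give $\delta_1(m,4)=1$ exactly when $m$ is odd and $\delta_2(m,4)=1$ exactly when $m\equiv0\pmod4$. Hence
\begin{align}\label{triS6}
\sum_{\substack{d\mid n\\ n/d\ \textup{odd}}}\frac{(-1)^d}{d}+\sum_{\substack{d\mid n\\ n/d\equiv0\,(\textup{mod}\,4)}}\frac{1}{d}=\sum_{j=1}^{n}\frac{(-1)^j}{j}\binom{n}{j}t_{6,j}(n).
\end{align}
It therefore suffices to prove \eqref{triReduce}, which we do by cases on the $2$-adic valuation of $n$.

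\noindent\emph{Case I, $n$ is odd}: Every divisor $d$ of $n$ is odd, so $(-1)^d=-1$ and no divisor satisfies $n/d\equiv0\pmod4$. The left-hand side of \eqref{triReduce} is $\sum_{d\mid n}(1-2)/d=-\sum_{d\mid n}1/d$, while the right-hand side is $\sum_{d\mid n}(-1)/d=-\sum_{d\mid n}1/d$. Thus \eqref{triReduce} holds.

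\noindent\emph{Case II, $n=2^k$ with $k\ge1$}: The left-hand side of \eqref{triReduce} is
\begin{align*}
(1-2)+\sum_{a=1}^{k}\frac{1+2}{2^a}=-1+3\sum_{a=1}^{k}\frac{1}{2^a}=-1+3\left(1-\frac{1}{2^k}\right)=2-\frac{3}{2^k}.
\end{align*}
On the right-hand side, $n/d$ is odd only for $d=2^k$, contributing $(-1)^{2^k}/2^k=1/2^k$, while $n/d\equiv0\pmod4$ means $d=2^a$ with $0\le a\le k-2$, contributing $\sum_{a=0}^{k-2}1/2^a=2-1/2^{k-2}=2-4/2^k$. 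The total is $1/2^k+2-4/2^k=2-3/2^k$, matching the left-hand side.

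\noindent\emph{Case III, $n=2^k m$ with $k\ge1$ and $m>1$ odd}: Write each divisor as $d=2^a e$ with $0\le a\le k$ and $e\mid m$. On the left-hand side, $(-1)^d=-1$ iff $a=0$, so
\begin{align*}
\sum_{d\mid n}\frac{1+2(-1)^d}{d}=\left((-1)+3\sum_{a=1}^{k}\frac{1}{2^a}\right)\sum_{e\mid m}\frac{1}{e}=\left(2-\frac{3}{2^k}\right)\sum_{e\mid m}\frac{1}{e}.
\end{align*}
On the right-hand side, $n/d$ is odd iff $a=k$, contributing $(1/2^k)\sum_{e\mid m}1/e$, while $n/d\equiv0\pmod4$ iff $0\le a\le k-2$, contributing $\left(\sum_{a=0}^{k-2}1/2^a\right)\sum_{e\mid m}1/e=(2-4/2^k)\sum_{e\mid m}1/e$. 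The total is $\left(1/2^k+2-4/2^k\right)\sum_{e\mid m}1/e=\left(2-3/2^k\right)\sum_{e\mid m}1/e$, again matching the left-hand side.

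In all three cases \eqref{triReduce} holds, and combining it with \eqref{triS6} yields \eqref{triJha}.
\end{proof}
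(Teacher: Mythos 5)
Your proposal is correct and follows essentially the same route as the paper: specialize Theorem \ref{theorem1} at $s=6$ and then verify the resulting divisor-sum identity by a three-case analysis on the power of $2$ dividing $n$, with both sides collapsing to $\left(2-\tfrac{3}{2^k}\right)\sum_{e\mid m}\tfrac{1}{e}$. The only (immaterial) difference is the organization of the cases — you separate out $n=2^k$ versus $n=2^km$ with $m>1$, while the paper splits according to whether the $2$-adic valuation is $0$, $1$, or at least $2$ — and your arithmetic, including the boundary behaviour of the $n/d\equiv0\pmod 4$ sum at $k=1$, checks out.
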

\begin{proof}
Setting $s=6$ in Theorem \ref{theorem1}, we obtain
\begin{align}\label{trinew}\sum_{\substack{d|n \\ \frac{n}{d}~\textup{odd}}}\frac{(-1)^d}{d}+\sum_{\substack{d|n \\ \frac{n}{d}\equiv0~(\textup{mod}~4)}}\frac{1}{d}= \sum_{j=1}^n \frac{(-1)^j}{j} \binom{n}{j} t_{6,j}(n).
\end{align}
To prove the equivalence of \eqref{trinew} and \eqref{triJha}, it is enough to show that
\begin{align}\label{triequiv}\sum_{\substack{d|n \\ \frac{n}{d}~\textup{odd}}}\frac{(-1)^d}{d}+\sum_{\substack{d|n \\ \frac{n}{d}\equiv0~(\textup{mod}~4)}}\frac{1}{d}= \sum_{d|n}\frac{1+2(-1)^d}{d}.
\end{align}
We show it by considering three possible cases of $n$.

\noindent \emph{Case I, $n$ is odd}: In this case, we notice that both sides of \eqref{triequiv} become
$$-\sum_{\substack{d|n \\ d~\textup{odd}}}\frac{1}{d}.$$

\noindent \emph{Case II, $n=2m$ with $m$ odd}: In this case, the left-hand side of \eqref{triequiv} is
\begin{equation*}\begin{aligned}\sum_{\substack{d|2m\\ \frac{2m}{d} \text{ odd}}} \frac{(-1)^d}{d} + \sum_{\substack{d|2m \\ \frac{2m}{d}\equiv0~(\textup{ mod}~4)}} \frac{1}{d}&=\sum_{\substack{2d|2m\\ d~ \text{odd}}} \frac{(-1)^{2d}}{2d}\notag\\
&=\frac{1}{2}\sum_{\substack{d|2m\\ d ~\text{odd}}} \frac{1}{d}.
\end{aligned}
\end{equation*}

\noindent 
The right-hand side of \eqref{triequiv} is
\begin{equation*}\begin{aligned}\sum_{\substack{d|2m\\ d ~\text{odd}}} \frac{1+2(-1)^{d}}{d} + \sum_{\substack{d|2m\\ d ~\text{even}}} \frac{1+2(-1)^{d}}{d}&=-\sum_{\substack{d|2m\\ d ~\text{odd}}} \frac{1}{d}+\frac{3}{2}\sum_{\substack{d|2m\\ d ~\text{odd}}} \frac{1}{d}\notag\\
&=\frac{1}{2}\sum_{\substack{d|2m\\ d ~\text{odd}}} \frac{1}{d}.\end{aligned}
\end{equation*}
Thus, \eqref{triequiv} holds good in this case. 

\noindent \emph{Case III, $n=2^km$ with $m$ odd and $k\geq2$}: In this case, the left-hand side of \eqref{triequiv} is

\begin{equation}\label{triequivleft}\begin{aligned}&\sum_{\substack{d|2^km\\ \frac{2^km}{d} ~\text{odd}}} \frac{(-1)^{d}}{d} +\sum_{\substack{d|2^km\\ \frac{2^km}{d}\equiv0~(\textup{mod}~4)}} \frac{1}{d}\\
&=\frac{1}{2^k}\sum_{\substack{d|2^km\\ d ~\text{odd}}} \frac{1}{d}+\left(1+\frac{1}{2}+\frac{1}{2^2}+\frac{1}{2^3}+\cdots\frac{1}{2^{k-2}}\right)\sum_{\substack{d|2^km\\ d ~\text{odd}}} \frac{1}{d}\\
&=\left(2-\frac{3}{2^k}\right)\sum_{\substack{d|2^km\\ d ~\text{odd}}}\frac{1}{d}.
\end{aligned}
\end{equation}

\noindent
Again, the right-hand side of \eqref{triequiv} is
\begin{equation}\label{triequivright}\begin{aligned}&\sum_{d|2^km} \frac{1+2(-1)^d}{d} \\
&=\sum_{\substack{d|2^km\\ d ~\text{odd}}} \frac{1+2(-1)^{d}}{d} +\sum_{\substack{d|2^km\\ d =2d_1,~ d_1~\text{odd}}} \frac{1+2(-1)^{d}}{d}+ \sum_{\substack{d|2^km\\ d=2^2d_1,~ d_1~\text{odd}}} \frac{1+2(-1)^{d}}{d}\\
&\quad+\sum_{\substack{d|2^km\\ d =2^3d_1,~ d_1~\text{odd}}} \frac{1+2(-1)^{d}}{d}+\cdots+\sum_{\substack{d|2^km\\ d =2^kd_1,~ d_1~\text{odd}}} \frac{1+2(-1)^{d}}{d}\\
&=-\sum_{\substack{d|2^km\\ d ~\text{odd}}} \frac{1}{d}+\frac{3}{2}\left(1+\frac{1}{2}+\frac{1}{2^2}+\frac{1}{2^3}+\cdots\frac{1}{2^{k-1}}\right)\sum_{\substack{d|2^km\\ d ~\text{odd}}} \frac{1}{d}\\&=\left(2-\frac{3}{2^k}\right)\sum_{\substack{d|2^km\\ d ~\text{odd}}}\frac{1}{d}.
\end{aligned}
\end{equation}
From \eqref{triequivleft} and \eqref{triequivright}, we arrive at \eqref{triequiv} for this case.
\end{proof}

\noindent
\begin{corollary}
For any positive integer $n$, we have
$$\sum_{\substack{d|n \\ \frac{n}{d}\equiv1~\text{or}~2~(\textup{mod}~3)}} \frac{(-1)^d}{d} + \sum_{\substack{d|n \\ \frac{n}{d}\equiv0~(\textup{mod}~3)}} \frac{1}{d} = \sum_{j=1}^{n} \frac{(-1)^j}{j} \binom{n}{j} t_{5,j}(n).$$
\end{corollary}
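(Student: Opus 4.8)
The plan is to obtain this identity as the direct specialization of Theorem~\ref{theorem1} at $s=5$, with none of the auxiliary case analysis required for the $s=4$ and $s=6$ corollaries. First I would set $s=5$ in Theorem~\ref{theorem1}, so that $s-2=3$ and the relevant parameter in $\delta_1$ and $\delta_2$ is $v=3$. The right-hand side $\sum_{j=1}^{n}\frac{(-1)^j}{j}\binom{n}{j}t_{5,j}(n)$ is then already in the asserted form, so all that remains is to evaluate the divisor sum on the left.

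Next I would unpack the two indicator functions at $v=3$. Since $v=3\ge 3$, the second clause of the definition of $\delta_1$ applies, giving $\delta_1(m,3)=1$ precisely when $m\equiv 1$ or $m\equiv v-1=2\pmod 3$, and $\delta_1(m,3)=0$ otherwise (that is, when $m\equiv 0\pmod 3$). Likewise $\delta_2(m,3)=1$ exactly when $m\equiv 0\pmod 3$ and vanishes otherwise. Writing $m=n/d$ and substituting these values into $\sum_{d\mid n}\frac{1}{d}\left((-1)^d\delta_1(n/d,3)+\delta_2(n/d,3)\right)$ splits the sum into the two pieces appearing in the corollary: the term $(-1)^d/d$ is retained exactly for those $d$ with $n/d\equiv 1$ or $2\pmod 3$, while the term $1/d$ is retained exactly for those $d$ with $n/d\equiv 0\pmod 3$. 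This reproduces the left-hand side of the corollary verbatim.

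There is essentially no obstacle here. Unlike the cases $s=4$ and $s=6$, where one further rewrites the divisor sum into a more symmetric closed form and so must argue over the $2$-adic valuation of $n$, the modulus $3$ governing $\delta_1,\delta_2$ is coprime to the $2$ governing the sign $(-1)^d$, so the two congruence conditions do not interact and no clean simplification presents itself. Hence the proof reduces to the bookkeeping above, and one could reasonably state it in a single sentence: setting $s=5$ in Theorem~\ref{theorem1} and invoking the definitions of $\delta_1$ and $\delta_2$ with $v=3$ gives the claim.
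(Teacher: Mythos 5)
Your proposal is correct and matches the paper's proof exactly: the paper likewise disposes of this corollary by simply setting $s=5$ in Theorem \ref{theorem1}, since with $v=s-2=3$ the definitions of $\delta_1$ and $\delta_2$ immediately produce the two congruence-restricted sums on the left-hand side. Your added observation that no further simplification is available (unlike the $s=4$ and $s=6$ cases) is consistent with the paper, which offers none.
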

\begin{proof}
The result follows by setting $s=5$ in Theorem \ref{theorem1}.
\end{proof}

\begin{corollary}
Let $n$ be a positive integer and $\sigma(n)$ denote the sum of positive divisors of $n$. Let $p$ be an odd prime such that $p\mid n$ and $p^2 \nmid n$. If $\frac{n}{p} \equiv 1 \text{ or } p-1 (\textup{mod}~p)$, then 
\begin{align*} \sum_{j=1}^{n} \frac{(-1)^j}{j} \binom{n}{j} t_{p+2,j}(n)=\frac{\sigma(n)}{n}-\frac{2}{p}. \\\intertext{Otherwise,}
\sum_{j=1}^{n} \frac{(-1)^j}{j} \binom{n}{j} t_{p+2,j}(n)=\frac{\sigma(n)}{n}-\frac{1}{p}.\end{align*}
\end{corollary}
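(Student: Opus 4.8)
The plan is to specialize Theorem~\ref{theorem1} at $s=p+2$ and then evaluate the resulting divisor sum using the hypothesis that $p\mid n$ but $p^{2}\nmid n$. Since $s-2=p$ is an odd prime, $\delta_1(\cdot,p)$ selects the residues $1$ and $p-1$ while $\delta_2(\cdot,p)$ selects the residue $0$ (all residues modulo $p$), so Theorem~\ref{theorem1} reads
\begin{equation*}
\sum_{j=1}^{n}\frac{(-1)^{j}}{j}\binom{n}{j}t_{p+2,j}(n)
=\underbrace{\sum_{\substack{d\mid n\\ \frac{n}{d}\equiv\pm1\ (p)}}\frac{(-1)^{d}}{d}}_{S_1}
+\underbrace{\sum_{\substack{d\mid n\\ \frac{n}{d}\equiv0\ (p)}}\frac{1}{d}}_{S_2}.
\end{equation*}
Using the elementary identity $\sum_{d\mid n}\frac1d=\frac{\sigma(n)}{n}$, the goal becomes to show that $\frac{\sigma(n)}{n}-(S_1+S_2)=\frac{c}{p}$, where $c=2$ in the first case and $c=1$ in the second.

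Next I would exploit $p\mid n$, $p^{2}\nmid n$ to write $n=pm$ with $\gcd(m,p)=1$ and sort the divisors of $n$ by divisibility by $p$. If $p\nmid d$ then $d\mid m$ and $\frac{n}{d}=p\,\frac{m}{d}\equiv0\ (\mathrm{mod}\ p)$, so $d$ contributes only to $S_2$; if $d=pe$ with $e\mid m$ then $\frac{n}{d}=\frac{m}{e}$ is coprime to $p$, so $d$ contributes to $S_1$ precisely when $\frac{m}{e}\equiv\pm1\ (\mathrm{mod}\ p)$. Since $p$ is odd, $(-1)^{pe}=(-1)^{e}$, and hence
\begin{equation*}
S_2=\sum_{e\mid m}\frac{1}{e}=\frac{\sigma(m)}{m},
\qquad
S_1=\frac{1}{p}\sum_{\substack{e\mid m\\ \frac{m}{e}\equiv\pm1\ (p)}}\frac{(-1)^{e}}{e}.
\end{equation*}

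Because $\sigma$ is multiplicative and $\gcd(m,p)=1$, we have $\frac{\sigma(n)}{n}=\frac{(1+p)\sigma(m)}{pm}=\left(1+\frac1p\right)\frac{\sigma(m)}{m}$, so $S_2$ already supplies the term $\frac{\sigma(m)}{m}$ and the entire statement collapses to the single divisor identity
\begin{equation*}
\frac{\sigma(n)}{n}-(S_1+S_2)
=\frac{1}{p}\left(\frac{\sigma(m)}{m}-\sum_{\substack{e\mid m\\ \frac{m}{e}\equiv\pm1\ (p)}}\frac{(-1)^{e}}{e}\right)
\overset{?}{=}\frac{c}{p}.
\end{equation*}
Writing $\frac{\sigma(m)}{m}=\sum_{e\mid m}\frac1e$, splitting off the divisors with $\frac{m}{e}\equiv\pm1\ (\mathrm{mod}\ p)$, and using $1-(-1)^{e}\in\{0,2\}$ according to the parity of $e$, the bracket simplifies to
\begin{equation*}
\sum_{\substack{e\mid m,\ e\ \mathrm{odd}\\ \frac{m}{e}\equiv\pm1\ (p)}}\frac{2}{e}
+\sum_{\substack{e\mid m\\ \frac{m}{e}\not\equiv0,\pm1\ (p)}}\frac{1}{e}.
\end{equation*}

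Finally I would try to prove that this bracket equals $c$, by casework on $m\bmod p$ (which fixes whether the divisor $e=1$, giving $\frac{m}{e}=m$, and the divisor $e=m$, giving $\frac{m}{e}=1$, satisfy the $\pm1$ condition) together with the parity of the divisors $e$, mirroring the three-case bookkeeping used in the proofs of the preceding corollaries. I expect this last step to be the main obstacle: the bracket is a sum of genuinely fractional terms that must reduce to the integer $c\in\{1,2\}$, and its value can in principle be influenced by nontrivial divisors $e\mid m$ (other than $1$ and $m$) for which $\frac{m}{e}\equiv\pm1\ (\mathrm{mod}\ p)$. I would therefore first confirm the dichotomy on the base cases $m=1$ and $m=2$, and then concentrate all the effort on controlling the contributions of any such intermediate divisors, since that is the delicate point on which the reduction to the clean constant $\frac{c}{p}$ rests.
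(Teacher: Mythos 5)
Your reduction is correct as far as it goes, and the step you single out as ``the main obstacle'' is exactly where the argument breaks down: the bracket does not reduce to the constant $c$, and no casework will make it do so, because the corollary as stated is false. Take $p=3$ and $n=15$, so $m=5$ and $\frac{n}{p}=5\equiv -1\pmod 3$ (first case, $c=2$). The divisors $e=1$ and $e=5$ of $m$ are both odd and both satisfy $\frac{m}{e}\equiv\pm1\pmod 3$, so your bracket equals $2+\frac{2}{5}=\frac{12}{5}\neq 2$. Equivalently, evaluating the left-hand side of Theorem~\ref{theorem1} directly at $s=5$, $n=15$ gives $1-\frac{1}{3}+\frac{1}{5}-\frac{1}{15}=\frac{4}{5}$, whereas the corollary predicts $\frac{\sigma(15)}{15}-\frac{2}{3}=\frac{14}{15}$. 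The same failure occurs for $n=21$, $p=3$, and generally whenever $m$ has a nontrivial odd divisor $e$ with $\frac{m}{e}\equiv\pm1\pmod p$ or any divisor with $\frac{m}{e}\not\equiv 0,\pm1\pmod p$.

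Your bookkeeping is in fact more careful than the paper's own proof, which asserts without justification that the specialization of Theorem~\ref{theorem1} equals $-\frac{1}{p}+\sum_{d\mid n,\,d\neq p}\frac{1}{d}$ (resp.\ $\sum_{d\mid n,\,d\neq p}\frac{1}{d}$). That assertion silently replaces the true contribution $\frac{(-1)^{pe}}{pe}\,\delta_1\bigl(\frac{m}{e},p\bigr)$ of each divisor $d=pe$ with $e\mid m$, $e>1$, by $+\frac{1}{pe}$, which is only valid when $e$ is even and $\frac{m}{e}\equiv\pm1\pmod p$ --- precisely the sign and vanishing issues you flagged. The identity happens to hold for $m=1,2,4$, which is presumably how the error escaped notice. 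The correct general statement is the one your computation produces: writing $n=pm$, the sum equals $\frac{\sigma(m)}{m}+\frac{1}{p}\sum_{e\mid m,\ \frac{m}{e}\equiv\pm1\ (\mathrm{mod}\ p)}\frac{(-1)^{e}}{e}$, and this does not simplify to $\frac{\sigma(n)}{n}-\frac{c}{p}$ in general. So the gap in your proposal is not a missing trick on your part; it is a genuine obstruction, and the honest conclusion of your analysis is a counterexample to the corollary rather than a proof of it.
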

\begin{proof}
Let $p$ be as stated in the corollary. Setting $s=p+2$ in Theorem \ref{theorem1}, it follows that, 
if $\frac{n}{p} \equiv 1 \text{ or } p-1 (\textup{mod}~p)$, then 
\begin{align*} \sum_{j=1}^{n} \frac{(-1)^j}{j} \binom{n}{j} t_{p+2,j}(n)=-\frac{1}{p} + \sum_{\substack{d|n \\ d \neq p}} \frac{1}{d}. \\\intertext{Otherwise,}
\sum_{j=1}^{n} \frac{(-1)^j}{j} \binom{n}{j} t_{p+2,j}(n)=\sum_{\substack{d|n \\ d \neq p}} \frac{1}{d}.\end{align*}
\end{proof}
As $$\sum_{\substack{d|n \\ d \neq p}} \frac{1}{d} =\sum_{\substack{d|n }} \frac{1}{d} - \frac{1}{p}=\frac{1}{n}\sum_{\substack{d|n }} \frac{n}{d}-\frac{1}{p}=\frac{1}{n}\sum_{\substack{d|n }}d-\frac{1}{p}=\frac{\sigma(n)}{n}-\frac{1}{p},$$
we readily arrive at the desired results.

\section*{\textbf{Acknowledgement}}
The first  author was partially supported by an ISPIRE Fellowship for Doctoral Research, DST, Government of India. The author acknowledges the funding agency.


\begin{thebibliography}{[HD82]}





\bibitem[B91]{Berndt} B. C. Berndt, \emph{Ramanujan's Notebooks, Part III},  Springer, 1991.

\bibitem[C74]{Comtet} L. Comtet, \emph{Advanced Combinatorics: The Art of Finite and Infinite Expressions}, D. Reidel Publishing Co., Dordrecht, 1974.
 
\bibitem[J20]{Jha1} S. K. Jha, \emph{An identity involving number of representations of $n$ as a sum of $r$ triangular numbers}, arXiv:2011.11038 (2020).

\bibitem[J21]{Jha2} S. K. Jha, \emph{An identity for the sum of inverses of odd divisors of $n$ in terms of the number of representations of $n$ as a sum of $r$ squares}, Rocky Mountain J. Math 51 (2021), 581--583.

\end{thebibliography}
\end{document}